\documentclass[10pt]{article}
\usepackage{amsmath}
\usepackage{amsfonts}
\usepackage{amssymb}
\usepackage[amsthm,amsmath,thmmarks]{ntheorem}
\usepackage{graphicx}


\newcommand{\eps}{\varepsilon}

\newcommand{\go}{\rightarrow}


\newcommand{\pt}{\partial_t}

\newcommand{\px}{\partial_x}

\newcommand{\pxx}{\partial_{xx}}

\newcommand{\ee}{\end{equation}}
\newcommand{\be}{\begin{equation}}
\newcommand{\bea}{\begin{eqnarray}}
\newcommand{\eea}{\end{eqnarray}}
\newcommand{\sbea}{\begin{subequations}\begin{eqnarray}}
\newcommand{\seea}{\end{eqnarray}\end{subequations}} 
\newcommand{\ees}{\end{equation*}}
\newcommand{\bes}{\begin{equation*}}
\newcommand{\beas}{\begin{eqnarray*}}
\newcommand{\eeas}{\end{eqnarray*}}


\newcommand{\rf}[1]{(\ref{#1})}



\newcommand{\Real}{\mathbb{R}}

\newtheorem{thm}{Theorem}[section]

\numberwithin{equation}{section}

\title{Weak solutions to lubrication systems describing the evolution of bilayer
thin films}
\author{Sebastian Jachalski\thanks{Weierstrass Institute,
Mohrenstra{\ss}e 39, 10117 Berlin, Germany,  E-Mail: sebastian.jachalski@wias-berlin.de} \and 
Georgy Kitavtsev\thanks{Max Planck Institute for Mathematics in the Sciences, Inselstra{\ss}e 22, 04103 Leipzig, Germany, E-Mail: georgy.kitavtsev@mis.mpg.de} \and Roman Taranets\thanks{Institute of Applied Mathematics and Mechanics of the National Academy of Sciences of Ukraine, Donetsk, 83114 Ukraine, E-Mail: taranets\_r@yahoo.com}}


\begin{document}
\maketitle

\begin{abstract}
The existence of global nonnegative weak solutions is proved for coupled
one-dimensional lubrication systems that describe the evolution of nanoscopic bilayer
thin polymer films that take account of Navier-slip  or no-slip conditions at both
liquid-liquid and liquid-solid interfaces. In addition, in the presence of attractive
van der Waals and repulsive Born intermolecular interactions existence of positive
smooth solutions is shown.
\end{abstract}



\section{Introduction}

During the last decades lubrication theory was successfully applied to modeling of dewetting processes in micro and nanoscopic liquid films on a solid polymer substrates see e.g. \cite{bertozzi2001dewetting,MWW06,ODB97} to name a few, for a review we refer to \cite{craster2009dynamics} and references therein. 
A typical closed-form one-dimensional lubrication equation derived from the underlying
equations for conservation of mass and momentum, together with boundary
conditions for the tangential and normal stresses, as well as the kinematic
condition at the free boundary, impermeability and a slip condition at the
liquid-solid interface has the form:
\be
\pt h= - \px \Big(M(h)\px \left( \pxx h-\Pi(h)\right)\Big), 
\label{NSM}
\ee
where function $h(x,t)$ denotes the height profile for the free surface of the film. The mobility function has the form $M(h)=h^3$ or $M(h)=bh^2$ for the no-slip
or Navier-slip conditions considered at the solid-liquid interface, respectively, where $b>0$ denotes the slip-length parameter.

Recently, this model was generalized to a coupled lubrication system describing evolution of a layered
system of two viscous, immiscible, nanoscopic Newtonian fluids evolving on a solid
substrate \cite{bandyopadhyay2005instability,danov1998stability,JMPW11} and
subsequently analysed in~\cite{barrett2008finite,jachalskiweierstrass,JMPW11,nepomnyashchy2009dynamics,pototsky2005morphology}.
The latter system  can be stated in the form:  
\begin{equation}\label{mod1}
  \begin{split}
   u_t = - \px \left( M_{11} \px p_1 +M_{12}\px p_2\right),\\
   v_t = - \px \left( M_{12} \px p_1 +M_{22}\px p_2\right), 
\end{split}
\end{equation}
where $u(x,t)$ and $v(x,t)$ denote the height of the lower liquid and the
difference between the heights of the upper and lower liquid,
respectively (see Fig. 1). The pressures  $p_1(x,t)$ and $p_2(x,t)$ are defined as
\begin{equation}\label{mod2}
 \begin{split}
  p_1&=(\sigma+1)\px^2 u+\px^2 v-\Pi_1(u),\\ 
 p_2&=\px^2 u+\px^2 v-\Pi_2(v),
 \end{split}
\end{equation}
where $\px^2 u$ and $\px^2 v$ are linearised surface tension terms and potentials $\Pi_1(u)$ and $\Pi_2(v)$ describe the intermolecular
interactions of the bottom liquid with the solid surface and of two liquids with each other,
respectively. The influence of intermolecular interactions is typically due to the competition between 
long-range attractive van der Waals and short-range Born repulsive intermolecular forces,  see~\cite{gennes85,ODB97}. 
In this article we consider two case: the absence of
intermolecular interactions, i.e. $\Pi_k(s)\equiv 0$ for $k=1,2$ and the case when
both van der Waals and  Born intermolecular forces are presented in the form
\begin{align} 
\Pi_k(s)&=\frac{1}{s^n}-\frac{\gamma_k}{s^m},\ (n<m,\ \gamma_1,\gamma_2\ll 
1)\label{mod4}
\end{align}
A typical choice for $(n,m)$ is $(3,12)$ corresponding to the standard
Lennard-Jones potential.

As in the case of the single layer lubrication equation \rf{NSM} the form of mobility matrix
\begin{align*}
M(u,v)=\begin{pmatrix}
	  M_{11}(u,v) && M_{12}(u,v) \\
	  \ && \ \\
	  M_{21}(u,v) && M_{22}(u,v)
        \end{pmatrix}
\end{align*}
depends on the slip conditions considered at the liquid-liquid and liquid-solid interfaces.
In the case of the no-slip at the both interfaces, it has the form
\begin{align}\label{Mns}
M=\frac{1}{\mu}\begin{pmatrix}
	  \frac{1}{3}u^3 && \frac{1}{2}u^2v \\
	  \ && \ \\
	  \frac{1}{2}u^2v && \frac{\mu}{3}v^3+u\ \!v^2
        \end{pmatrix}.
\end{align}
\begin{figure}[t]
\begin{center}
 \includegraphics[width=0.6\textwidth]{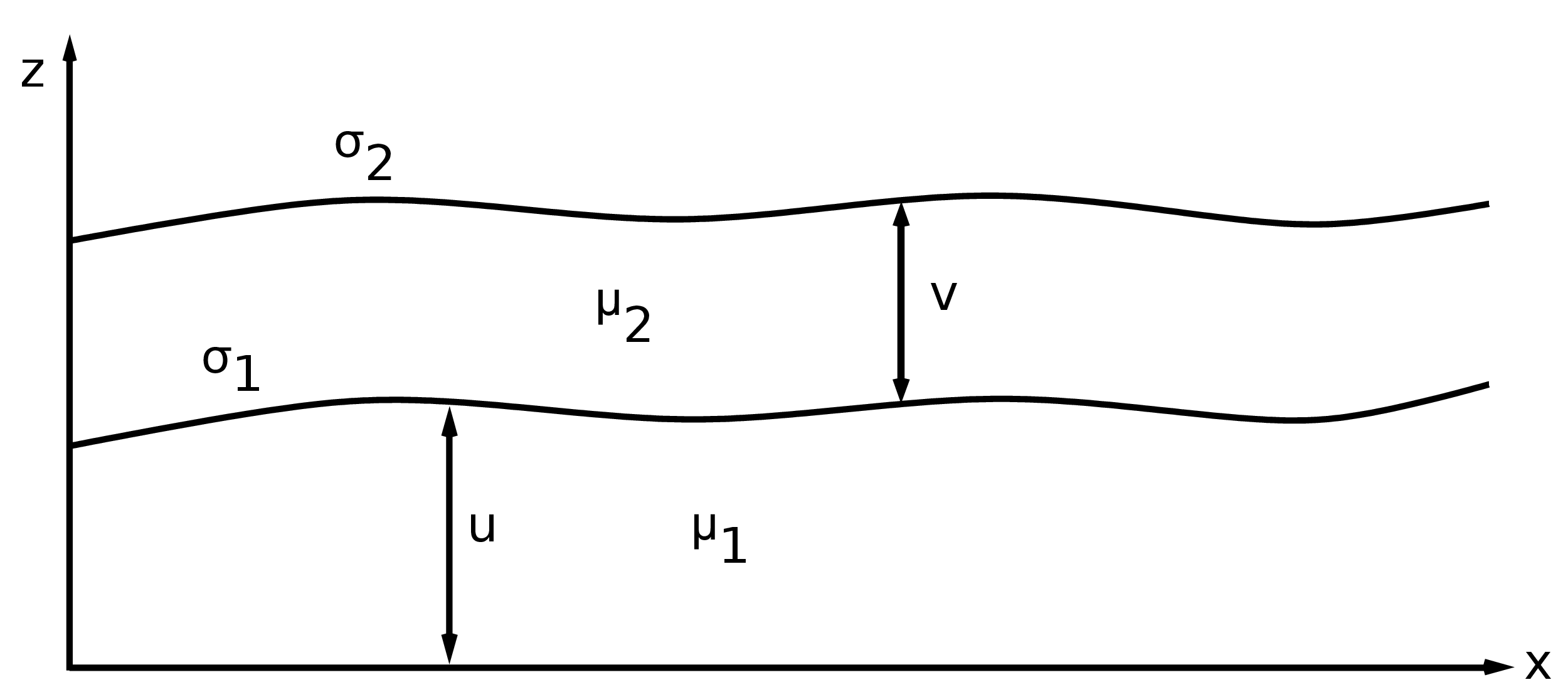}
\caption{Sketch}
\end{center}
\label{fig1}
\end{figure}
The model parameters $\sigma=\sigma_1/\sigma_2$ and $\mu=\mu_1/\mu_2$ in \rf{mod2} and \rf{Mns} are
positive constants which denote the ratios of surface tensions and
viscosities, respectively.
Recently, the lubrication system for the case of Navier-slip conditions considered at both 
liquid-liquid and liquid-solid interfaces was derived in~\cite{JMPW11,pototsky2005morphology}.
The corresponding mobility matrix can be stated in the form
\begin{align*}
M=\frac{1}{\mu}\begin{pmatrix}
	  b_1u^2 && b_1uv \\
	  \ && \ \\
	  b_1uv && b_1v^2+b(\mu+1)v^2
        \end{pmatrix},
\end{align*}
where $b_1>0$ and $b\ge 0$ denote the slip lengths at the solid-liquid and liquid-liquid interfaces, respectively. We will discuss the origin of the different mobilities in section 5.
Note that rescaling time by $b_1$ and introducing the parameter $\alpha:=\frac{b}{b_1}(\mu+1)>0$ the latter matrix can be written in the form
\begin{align}\label{Mis}
M=\frac{1}{\mu}\begin{pmatrix}
	  u^2 && uv \\
	  \ && \ \\
	  uv && (1+\alpha)v^2
        \end{pmatrix}.
\end{align}

In this study we consider the system \rf{mod1} on a time-space domain $Q_T=\Omega\times (0,T)$ where $\Omega=(0,1)$
with boundary conditions
\begin{align}
u_x=u_{xxx}=v_x=v_{xxx}=0\quad\ &\text{for } x\in \partial\Omega\label{basbc}
\end{align}
and the initial functions
\begin{align}
 u(x,0)=u_0(x)\ge 0,\ v(x,0)=v_0(x)\ge 0\quad\ &u,v\in H^1(\Omega).\label{basic}
\end{align}
We consider the system \rf{mod1} both with the mobility matrixes \rf{Mns}
and \rf{Mis}. The system \rf{mod1} can be also generalised to incorporate
presence surfactants or temperature-gradient-caused Marangoni flows (see
e.g. \cite{chugunova2012nonnegative,danov1998stability,nepomnyashchy2009dynamics}).

Starting from the seminal work of  Bernis and Friedman~\cite{bernis1990higher}
existence theory of nonnegative weak solutions for single nonlinear parabolic
equations in the form \rf{NSM} was successfully developed
(see e.g. \cite{BP98,chugunova2010nonnegative} and references there in). 
Note that the system \rf{mod1} inherits from the one-layer lubrication
equation \rf{NSM} the high order and degeneracy as one of the fluid heights
$u$ or $v$ goes to zero.
In contrast to \rf{NSM} there are only few analytical results known about
the system \rf{mod1}. The structure of its stationary
solutions were considered in \cite{jachalskiweierstrass,pototsky2005morphology}.
Existence of nonnegative weak solutions to \rf{mod1} in the no-slip case, i.e. with mobility matrix given by \rf{Mns} was shown recently in
\cite{barrett2008finite} using a finite-element approximation
under a strong assumption on presence of intermolecular potentials of the
form \rf{mod4} between liquid films and between each film and the substrate as
well. 

In this article, we show existence of nonnegative weak solutions to \rf{mod1} with \rf{basbc}--\rf{basic} 
for both no-slip \rf{Mns} and Navier-slip cases \rf{Mis} in the absence of
intermolecular forces. In turn in the presence of intermolecular forces
between liquid films and just between bottom liquid and the substrate as in \rf{mod2} we show that
the observed weak solution becomes positive and smooth.

In our approach we extend ideas introduced in
\cite{bernis1990higher} for the single lubrication equation of the form
\rf{NSM} to the system \rf{mod1}. However, the extension is not straight forward. There are new challenges since the mobility matrix degenerates in more than one way. Beside the case that $u$ and $v$ vanish simultaneously, also the mobility matrix $M$ may degenerates if either $u$ or $v$ becomes zero while the other does not. All these cases have to be treated very carefully. In section 2  we introduce the
corresponding regularized version of the system \rf{mod1} for the no-slip case, with mobility matrix \rf{Mns}. By deriving the energy
dissipation and corresponding a priori estimates, using the theory of
uniformly parabolic systems (see \cite{euidelʹman1969parabolic}), we show global existence
of smooth solutions to the regularised problem. Furthermore, we show that the
latter converge to a suitably defined weak solution of the original system
\rf{mod1}. Notice that these results are independent of the presence of
intermolecular forces in the equations.

In section 3 we prove
nonnegativity of thus obtained weak solutions in the case when intermolecular
forces are absent. In the presence of intermolecular forces as in \rf{mod1} the weak solutions
turn out to be positive and smooth.
Our approach for proving the nonnegativity is based on a definition of suitable
analogs of Bernis and Friedman entropies for functions $u$ and $v$ and showing
their combined dissipation.

Moreover, in section 4 we show global existence of nonnegative weak solutions to \rf{mod1} in the Navier-slip case \rf{Mis}.

\section{Existence of weak solutions in the no-slip case}

In this section we consider the system \rf{mod1}
without intermolecular interactions, i.e. with $p_1,p_2$ in \rf{mod2} given by
\begin{align}
 p_1=(\sigma+1)u_{xx}+v_{xx},\ \ p_2=u_{xx}+v_{xx};
 \label{pot}
\end{align}
and with the no-slip mobility matrix
\begin{align*}
M=\frac{1}{\mu}\begin{pmatrix}
        \frac{1}{3}|u|^3 && \frac{1}{2}|u|^2|v| \\
	\ && \ \\
	\frac{1}{2}|u|^2|v| && \frac{\mu}{3}|v|^3+|u||v|^2
        \end{pmatrix}.
\end{align*}
Notice that we replaced every $u$ and $v$ in the mobility matrix \rf{Mns} by their absolute values to ensure that the latter is positive semidefinite.
We  will prove existence of global weak solutions to \rf{mod1}, \rf{pot} considered with boundary and initial conditions \rf{basbc}--\rf{basic}.
We begin our analysis with introduction of a regularised version of \rf{mod1}, \rf{basic} and derivation of a priori estimates for its solutions.

\subsection{Regularised system and a priori estimates}

Since \rf{mod1} is degenerate at $u=0$ and $v=0$, we approximate it by a family of non-degenerate equations
\begin{align}
   \begin{matrix} u_{t}+((M_{11}+\eps) p_{1,x}+M_{12}p_{2,x})_x=0 \\
   v_{t}+(M_{21} p_{1,x}+(M_{22}+\eps)p_{2,x})_x=0\end{matrix} \quad \text{in } Q_{T},\label{regequa2}
\end{align}
where $\eps>0$ is arbitrary. Note that the regularised mobility matrix is positive definite for all $u$ and $v$. Correspondingly the system \rf{regequa2}
is uniformly parabolic in Petrovskii sense (see \cite{euidelʹman1969parabolic} for definition).
Furthermore, we approximate $u_0$ and $v_0$ in the $H^1(\Omega)$-norm by $C^{4+\alpha}$ functions $u_{0\eps}$ and $v_{0\eps}$ satisfying \rf{basbc},
\begin{align}\label{dem}
 u_{0\eps}(x)&\geq u_{0}(x)\text{ and }v_{0\eps}(x)\geq v_{0}(x)\text{ for }x\in\Omega,
\end{align}
and replace \rf{basic} by
\begin{align}\label{regic}
 u(x,0)&=u_{0\eps}(x),\ v(x,0)=v_{0\eps}(x).
\end{align}
By \cite[Theorem. 6.3, p.302]{euidelʹman1969parabolic} the system \rf{regequa2} considered with  \rf{basbc}, \rf{regic} has a unique local solution $(u_{\eps},v_{\eps})$ in $Q_\tau$ for some small $\tau=\tau(\eps)>0$.

Everywhere below in this article we denote by $C$ positive constants independent of $\eps$ which may vary from line to line. Let us also introduce notations 
\begin{align}
 M_\eps&=M(u_{\eps},v_{\eps})\nonumber\\
 p_{1,\eps}&=(\sigma+1)u_{\eps,xx}+v_{\eps,xx},\ \text{and}\ p_{2,\eps}=u_{\eps,xx}+v_{\eps,xx}.
\label{not}
\end{align}
\textbf{A priori estimates}\newline
Let us define an energy (Lyapunov) functional for the system \rf{mod1} coupled with \rf{basbc} as
\begin{align}\label{fte}
 E(u_\eps,v_\eps)=\int\limits_\Omega \left[\sigma u_{\eps,x}^2+(u_{\eps,x}+v_{\eps,x})^2\right]dx
\end{align}
Indeed, differentiating the latter in time along solutions of \rf{mod1} with \rf{basbc} one obtains the corresponding energy equality
\begin{equation}\label{dte}
\begin{split}
 \frac{1}{2}\frac{d}{dt}E(u_\eps,v_\eps)+\int\limits_\Omega ( M_{11\eps}p_{1\eps,x}^2+2 M_{12\eps}p_{1\eps,x}p_{2\eps,x}+M_{22\eps}p_{2\eps,x}^2)dx\\
+\eps\int\limits_\Omega \left(p_{1\eps,x}^2+p_{2\eps,x}^2\right)dx=0.
\end{split}
\end{equation}
Note that the second term in \rf{dte} is nonnegative since  $M_\eps$ is positive semidefinite.
By the approximation properties of $u_{0\eps},v_{0\eps}$ one has
\begin{equation}\label{apri4}
\begin{split}
 \int\limits_\Omega u_{0\eps,x}^2dx\leq(1+\eta(\eps))\int\limits_\Omega u_{0,x}^2dx,\\
 \int\limits_\Omega v_{0\eps,x}^2dx\leq(1+\eta(\eps))\int\limits_\Omega v_{0,x}^2dx,,
\end{split}
\end{equation}
where $\eta(\eps)\rightarrow 0$ if $\eps \rightarrow 0$ and therefore $E(u_{0\eps},v_{0\eps})\leq C$ holds. 
This together with \rf{dte} imply the following a priori estimates:
\begin{align}\label{h1}
 \displaystyle \sup_{t\in(0,\tau)}\int\limits_\Omega u_{\eps,x}^2dx\leq C,\ \sup_{t\in(0,\tau)}\int\limits_\Omega u_{\eps,x}^2dx\leq C,
\end{align}
and
\begin{align}
 \iint\limits_ {Q_{\tau}} ( M_{11\eps}p_{1\eps,x}^2&+2 M_{12\eps}p_{1\eps,x}p_{2\eps,x}+M_{22\eps}p_{2\eps,x}^2)dx\leq C,\label{impbound}\\
&\eps\iint\limits_{Q_{\tau}}\left(p_{1\eps,x}^2+p_{2\eps,x}^2\right)dx\leq C.\label{epsp}
\end{align}
Integrating \rf{regequa2} in time we deduce the conservation of mass law
\begin{align}\label{apri3}
 \int\limits_\Omega u_{\eps,x}(x,t)dx=\int\limits_\Omega u_{0\eps,x}dx,\ \int\limits_\Omega v_{\eps,x}(x,t)dx=\int\limits_\Omega v_{0\eps,x}dx
\end{align}
for all $t\in (0,\tau)$.
Using this, \rf{apri4}, Poincare's inequality and the Sobolev embedding theorem
\begin{align*}
 H^1(\Omega)\subset C^{0,\frac{1}{2}}(\bar{\Omega})
\end{align*}
one obtains
\begin{align}\label{H12}
 \displaystyle ||u_{\eps}(\,.\,,t)||_{C^{0,\frac{1}{2}}(\bar{\Omega})} \leq C,\ ||v_{\eps}(\,.\,,t)||_{C^{0,\frac{1}{2}}(\bar{\Omega})} \leq C.
\end{align}

Next, we obtain uniform H\"older estimates for $u_\eps$ and $v_\eps$ in time. 
Let us introduce functions
\begin{align}
J_{1,\eps}=M_{11\eps}p_{1\eps,x}+M_{12\eps}p_{2\eps,x}\ \ \text{and} \ \ J_{2,\eps}=M_{21\eps}p_{1\eps,x}+M_{22\eps}p_{2\eps,x}.\nonumber 
\end{align}
Observe that for every $t\in (0,\tau)$
 \begin{align}\label{apri7a}
&\iint\limits_{Q_t} J_{1,\eps}^2 dxdt\nonumber\\
&\leq C \iint\limits_{Q_t} M_{11\eps}\left(M_{11\eps}p_{1\eps,x}^2
+2M_{12\eps} p_{1\eps,x}p_{2\eps,x}+M_{22\eps}p_{2\eps,x}^2\right) dxdt\nonumber\\
&\leq C,
\end{align}
where we use $M_{12\eps}^2\leq M_{11\eps}M_{22\eps}$, \rf{impbound} and \rf{H12}. Analogously,
\begin{align}\label{apri7b}
 \iint\limits_{Q_t}  J_{2 ,\eps}^2dxdt\leq C
\end{align}
holds.
Now, using \rf{apri7a}--\rf{apri7b} and the relations
\begin{align}
\iint\limits_{Q_t} u_\eps\phi_t=-\iint\limits_{Q_t} J_{1,\eps}\phi_x\ \ \text{and} \ \ \iint\limits_{Q_t} v_\eps\phi_t=-\iint\limits_{Q_t} J_{2,\eps}\phi_x\nonumber 
\end{align}
considered with the special test function $\phi$ taken exactly as in the analogous proof for the single layer lubrication equation \rf{NSM}
in \cite[Lemma 2.1]{bernis1990higher} one obtains that for all $x\in\bar{\Omega}$ and $t_1,\,t_2$ in $(0,\tau)$ the following holds
\begin{equation}\label{apri8}
 \begin{split}
|u_{\eps}(x,t_2)-u_{\eps}(x,t_1)|&\leq C|t_2-t_1|^\frac{1}{8},\\
|v_{\eps}(x,t_2)-v_{\eps}(x,t_1)|&\leq C|t_2-t_1|^\frac{1}{8}
\end{split}
\end{equation}
\textbf{Conclusion}\newline
The relations \rf{H12} and \rf{apri8} imply upper bounds on the $C^{\frac{1}{2},\frac{1}{8}}_{x,t}$-norms of $u_{\eps}$ and $v_{\eps}$ in $Q_\tau$, which are independent of $\tau,\eps$. These a priori bounds allows us to conclude that $(u_{\eps},v_{\eps})$ can be extended step-by-step to a solution of \rf{regequa2} considered with \rf{basbc}, \rf{regic} in $Q_{T}$ for any positive $T>0$ (see \cite[Theorem. 9.3, p.316]{euidelʹman1969parabolic}), and that
\begin{equation}\label{apri10}
\begin{split}
 \text{the sequences }&\{u_{\eps}\} \text{ and }  \{v_{\eps}\} \text{ are a uniformly bounded}\\
\text{ and }&\text{equi-continuous families in }\bar{Q}_{T}.
\end{split}
\end{equation}

\subsection{Convergence to global weak solutions}

Here we show that solutions $u_\eps,\,v_\eps$ of the regularised system \rf{regequa2} converge to suitably defined global weak solutions of the initial system \rf{mod1}.
By \rf{apri10}, every sequence $\eps\longrightarrow0$ has a subsequence (for short both not labeled) such that
\begin{align}\label{weak1}
 u_{\eps}\longrightarrow u,\ v_{\eps}\longrightarrow v \quad\text{uniformly in }\bar{Q}_{T}.
\end{align}
Note that, due to uniform bounds \rf{H12} and \rf{apri8},  any such limits $u$ and $v$ can be defined globally in time using a standard Cantor diagonal argument (choosing a sequence $T_n\rightarrow \infty$).
\begin{thm}
Any pair of functions $(u,v)$ obtained as in \rf{weak1} satisfies for any $T>0$ the following properties:
\begin{align}
 &u,v\in C^{1/2,1/8}_{x,t}(\bar{Q}_{T}),\ 
 u,v\in C^{4,1}_{x,t}(P),\label{ass1}\\
 &M_{11}p_{1,x}+M_{12}p_{2,x},\ M_{21}p_{1,x}+M_{22}p_{2,x}\in L^2(P),\\
& |u|^3p_{1,x} 		 \in L^2(R),
  |v|^3p_{2,x}		 \in L^2(S);
\end{align}
where $P=\bar{Q}_{T}\backslash(\{u=0\}\cup\{v=0\}\cup\{t=0\})$, $R=\bar{Q}_{T}\cap\{v=0\}\cap\{|u|>0\}$ and $S=\bar{Q}_{T}\cap\{u=0\}\cap\{|v|>0\}$.
Furthermore, there exists a function $w\in L^2(R)$, such that $(u,v)$ satisfies \rf{mod1} in the following sense:
\begin{align}\label{ass3}
 \iint\limits_{Q_{T}}u\phi_{t}&+\iint\limits_P \left(M_{11} p_{1,x}+M_{12}p_{2,x}\right)\phi_{x} \\&\quad\quad+\iint\limits_{R}\left(\frac{1}{3\mu}|u|^3p_{1,x}+\frac{1}{2\mu}|u|^2w\right)\phi_{x}=0\nonumber\\
  \iint\limits_{Q_{T}}v\phi_{t}&+\iint\limits_P \left(M_{21} p_{1,x}+M_{22}p_{2,x}\right)\phi_{x} +\iint\limits_{S}\frac{1}{3}|v|^3p_{2,x}\phi_{x}=0
 \label{ass4}
\end{align}
for all $\phi\in Lip(\bar{Q}_{T}),\phi=0$ near $t=0$ and $t=T$;
\begin{align}
&u(x,0)=u_{0}(x),\ v(x,0)=v_{0}(x),\quad x\in\bar{\Omega},\label{ass2}\\
&||u(\cdot,t)||_{L^1(\Omega)}=||u_0||_{L^1(\Omega)},\ ||v(\cdot,t)||_{L^1(\Omega)}=||u_0||_{L^1(\Omega)},\label{ass6}\\
&u_{x}(\cdot,t)\rightarrow u_{0,x}\text{ and }v_{x}(\cdot,t)\rightarrow v_{0,x}\text{ strongly in }L^2(\Omega)\text{ as }t\rightarrow0,\label{ass5}
\end{align}
and
\begin{equation}
 \begin{split}
  \label{weak8}
u\text{ and }v &\text{ satisfy \rf{basbc} at all points of the lateral}\\ &\text{ boundary, where }u\neq 0\ \text{and}\ v\neq 0.
 \end{split}
\end{equation}
Finally, the following energy inequality holds
\begin{align}\label{EE}
E(u(\cdot,T),v(\cdot,T))+\iint\limits_P \left(M_{21} p_{1,x}^2+M_{22}p_{2,x}^2\right)\le E(u_0,v_0).
\end{align}
\end{thm}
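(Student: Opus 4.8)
The plan is to pass to the limit $\eps\to0$ in the regularised system \rf{regequa2}, using the compactness \rf{weak1}, \rf{apri10} together with the $\eps$-uniform a priori estimates \rf{h1}, \rf{impbound}, \rf{epsp}, \rf{apri7a}--\rf{apri7b}. I would organise the argument into four blocks: (i) higher regularity on $P$; (ii) weak $L^2$-compactness of the fluxes and identification of their limits, including the construction of $w$; (iii) passage to the limit in the weak formulation and the remaining side conditions; (iv) the energy inequality. For (i), fix a compact $K\Subset P$. By \rf{weak1} and compactness $u_\eps,v_\eps$ are bounded away from $0$ on $K$ for all small $\eps$, so the regularised mobility matrix is uniformly positive definite there and, since $u_\eps,v_\eps$ are uniformly bounded in $C^{1/2,1/8}_{x,t}$ by \rf{H12}, \rf{apri8}, the principal coefficients of \rf{regequa2} are uniformly H\"older; parabolic Schauder theory for systems that are uniformly parabolic in the sense of Petrovskii (\cite{euidelʹman1969parabolic}), bootstrapped, then yields $\eps$-uniform $C^{4+\alpha,1+\alpha/4}_{x,t}$ bounds on slightly smaller sets, and, since \rf{basbc} is preserved by the approximation, also up to the lateral boundary at points where $u,v\neq0$. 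Arzel\`a--Ascoli gives $u,v\in C^{4,1}_{x,t}(P)$, the locally uniform convergence $p_{i\eps,x}\to p_{i,x}$ on $P$, the classical validity of \rf{mod1} in $P$, and (via \rf{epsp}) $\eps\,p_{i\eps,x}\to0$ in $L^2(Q_T)$; in particular \rf{weak8} follows.

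For (ii), I would first note that for the mobility \rf{Mns} one has the \emph{uniform} improvement $M_{12\eps}^2\le\tfrac34\,M_{11\eps}M_{22\eps}$, so the dissipation bound \rf{impbound} controls $\iint_{Q_T}M_{11\eps}p_{1\eps,x}^2$ and $\iint_{Q_T}M_{22\eps}p_{2\eps,x}^2$ separately; consequently $|u_\eps|^{3/2}p_{1\eps,x}$, $|v_\eps|^{3/2}p_{2\eps,x}$, $|u_\eps|^{1/2}|v_\eps|p_{2\eps,x}$, the fluxes $J_{1,\eps},J_{2,\eps}$ and all individual entries $M_{ij\eps}p_{j\eps,x}$ are bounded in $L^2(Q_T)$. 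Along a diagonal subsequence all of these converge weakly in $L^2(Q_T)$. On $P$ the limits are the classical ones by (i), which with Fatou applied to \rf{impbound} gives the $L^2(P)$-memberships and $|u|^3p_{1,x}\in L^2(R)$. On $S=\bar Q_T\cap\{u=0\}\cap\{|v|>0\}$, and likewise on $\{u=0\}\cap\{v=0\}$, all the terms carrying a factor of $u_\eps$ converge to $0$ \emph{strongly} in $L^2$, e.g. $\|\mathbf{1}_S\,M_{21\eps}p_{1\eps,x}\|_{L^2}\le C\,\|\mathbf{1}_S\,|u_\eps|^{1/2}\|_{L^\infty}\to0$ by the uniform convergence $u_\eps\to0$ on $S$; hence $J_{2,\eps}\to\tfrac1{3}|v|^3p_{2,x}$ on $S$ and every flux vanishes on $\{u=v=0\}$. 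On $R=\bar Q_T\cap\{v=0\}\cap\{|u|>0\}$, exhausting $R$ by $R_\delta=R\cap\{|u|\ge\delta\}$ one has $u_\eps\ge\delta/2$ for small $\eps$, so $|v_\eps|p_{2\eps,x}=|u_\eps|^{-1/2}\bigl(|u_\eps|^{1/2}|v_\eps|p_{2\eps,x}\bigr)$ is bounded in $L^2(R_\delta)$; its weak limits are consistent in $\delta$ and define $w$ on $R$ with $|u|^2w\in L^2(R)$, $M_{12\eps}p_{2\eps,x}\rightharpoonup\tfrac1{2\mu}|u|^2w$ there, while $M_{11\eps}p_{1\eps,x}=J_{1,\eps}-M_{12\eps}p_{2\eps,x}\rightharpoonup\tfrac1{3\mu}|u|^3p_{1,x}$ on $R$.

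For (iii)--(iv): given $\phi\in\mathrm{Lip}(\bar Q_T)$ with $\phi=0$ near $t=0,T$, the boundary conditions \rf{basbc} kill the spatial boundary terms, so \rf{regequa2} yields $\iint_{Q_T}u_\eps\phi_t=-\iint_{Q_T}(J_{1,\eps}+\eps p_{1\eps,x})\phi_x$; splitting $Q_T=P\cup R\cup S\cup(\{u=0\}\cap\{v=0\})$ and invoking block (ii) together with $u_\eps\to u$ uniformly gives \rf{ass3}, and analogously \rf{ass4}. The initial condition \rf{ass2} and mass conservation \rf{ass6} follow by passing to the limit in \rf{regic}, \rf{apri3} using \rf{weak1} and the approximation of $u_{0\eps},v_{0\eps}$. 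For \rf{ass5}, integrating \rf{dte} gives $E(u_\eps(\cdot,t),v_\eps(\cdot,t))\le E(u_{0\eps},v_{0\eps})$; letting $\eps\to0$ (weak lower semicontinuity of $E$, and $E(u_{0\eps},v_{0\eps})\to E(u_0,v_0)$ by \rf{apri4}) and then $t\to0$, together with the reverse inequality $\liminf_{t\to0}E(u(\cdot,t),v(\cdot,t))\ge E(u_0,v_0)$ coming from $u(\cdot,t)\rightharpoonup u_0$, $v(\cdot,t)\rightharpoonup v_0$ in $H^1(\Omega)$, yields $E(u(\cdot,t),v(\cdot,t))\to E(u_0,v_0)$; weak convergence plus convergence of norms then upgrades to strong $L^2$-convergence of $u_x(\cdot,t)$ and $v_x(\cdot,t)$. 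Finally \rf{EE} follows by integrating \rf{dte} over $(0,T)$, discarding the nonnegative $\eps$-term, restricting the dissipation integral to $P$, and passing to the limit via weak lower semicontinuity of $E$ at time $T$ and Fatou's lemma for the nonnegative integrand on $P$ (using the locally uniform convergence $p_{i\eps,x}\to p_{i,x}$ from (i)).

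The main obstacle is block (ii): the limiting fluxes on the degeneracy sets $R$ and $S$ must be identified as weak $L^2$-limits rather than pointwise products, since there $M$ degenerates off-diagonally without the matching pressure gradient being controlled; the auxiliary function $w$ on $R$ arises precisely because $|v_\eps|p_{2\eps,x}$ cannot be split into its factors in the limit, and checking $|u|^2w\in L^2(R)$ requires the careful exhaustion argument above. Block (i) is also delicate, because \rf{mod1} degenerates in several distinct ways and one must isolate the region that is genuinely uniformly parabolic before invoking the Schauder machinery.
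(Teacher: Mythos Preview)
Your proposal is correct and follows essentially the same route as the paper: interior parabolic Schauder regularity on $P$, decoupling of the dissipation to obtain the separate bounds $\iint |u_\eps|^3 p_{1\eps,x}^2,\ \iint |v_\eps|^3 p_{2\eps,x}^2,\ \iint |u_\eps||v_\eps|^2 p_{2\eps,x}^2\le C$ (the paper does this via a Young inequality with constants $7/24$ and $6/7$, while your observation $M_{12}^2\le\tfrac34\,M_{11}M_{22}$ is an equivalent and slightly cleaner way to the same end), weak $L^2$-extraction of the fluxes, construction of $w$ on $R$ as the weak limit of $v_\eps p_{2\eps,x}$, and a $\delta$-exhaustion/diagonal argument to pass to the limit in \rf{weak9}--\rf{weak10}. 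The only cosmetic difference is bookkeeping: the paper decomposes $Q_T$ into the $\delta$-neighbourhoods $\{|u|>\delta,|v|>\delta\}$, $I_{1,\delta}=\{|v|\le\delta<|u|\}$, $\{|u|\le\delta\}$ (respectively $I_{2,\delta}$, $\{|v|\le\delta\}$) of the \emph{limit} functions and then sends $\delta\to0$, whereas you split directly as $P\cup R\cup S\cup\{u=v=0\}$ and exhaust each piece by $\{|u|\ge\delta\}$ or $\{|v|\ge\delta\}$; the two organisations are interchangeable.
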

\begin{proof}
 By the properties of the constructed solutions $u_\eps$, $v_\eps$ to the regularised system and their uniform convergence to $u$, $v$ the first assertion in 
\rf{ass1} and also \rf{ass2}--\rf{ass6} follow immediately. Using \rf{impbound}, we observe 
\begin{align}
 \iint\limits_{Q_{T}}(M_{11\eps}p_{1,\eps,x}^2+M_{22\eps}p_{2\eps,x}^2)dxdt&\leq C -2\iint\limits_{Q_{T}} M_{12\eps} p_{1\eps,x}p_{2\eps,x}dxdt\nonumber\\
&= C - \frac{1}{\mu}\iint\limits_{Q_{T}}|u_\eps|^2|v_\eps| p_{1\eps,x}p_{2\eps,x}dxdt.\nonumber
\end{align}
From this applying Young's inequality
\begin{align}
|u_\eps|^2|v_\eps| p_{1\eps,x}p_{2\eps,x} \leq\frac{7}{24}|u_\eps|^3 p_{1\eps,x}^2+\frac{6}{7}|u_\eps||v_\eps|^2p_{2\eps,x}^2\nonumber
\end{align}
one obtains
\begin{align}\label{p1p1}
 \iint\limits_{Q_{T}}(M_{11\eps}p_{1,\eps,x}^2+M_{22\eps}p_{2\eps,x}^2)dxdt&\leq C
\end{align}
and therefore estimates
\begin{align}
 \iint\limits_{Q_{T}}|u_\eps|^3p_{1\eps,x}^2dxdt\leq C,&\ \iint\limits_{Q_{T}}|v_\eps|^3p_{2\eps,x}^2dxdt\leq C,\ \iint\limits_{Q_{T}}|u_\eps||v_\eps|^2p_{2\eps,x}^2dxdt\leq C.
\label{p1p2}
\end{align}
For any $\phi$ as in \rf{ass3} one has
\begin{align}\label{weak9}
 \iint\limits_{Q_{T}}u_{\eps}\phi_{t}dxdt+\iint\limits_{Q_{T}} \left((M_{11\eps}+\eps) p_{1\eps,x}+M_{12\eps}p_{2\eps,x}\right) \phi_{x}dxdt=0,\\
\label{weak10}
\iint\limits_{Q_{T}}v_{\eps}\phi_{t}dxdt+\iint\limits_{Q_{T}} \left(M_{21\eps} p_{1\eps,x}+(M_{22\eps}+\eps)p_{2\eps,x}\right) \phi_{x}dxdt=0.
\end{align}
By \rf{apri7a} and \rf{apri7b} there exist $J_1,J_2\in L^2(Q_{T})$ and a subsequence as $\eps\rightarrow 0$ such that
\begin{align}
 &\left(M_{11\eps} p_{1\eps,x}+M_{12\eps}p_{2\eps,x}\right)\rightharpoonup J_1\nonumber\\
\text{and } \left(M_{21\eps}\!\!\right.&\left.p_{1\eps,x}+M_{22\eps}p_{2\eps,x}\right)\rightharpoonup J_2\ \text{weakly in}\ L^2(Q_{T}). 
\end{align}
Additionally by \rf{epsp}
\begin{align}
 \eps\!\!\iint\limits_{Q_{T}}p_{1\eps,x}\phi_{x}dxdt\rightarrow 0,\ \eps\!\!\iint\limits_{Q_{T}}p_{2\eps,x}\phi_{x}dxdt\rightarrow 0 \text{ as }\eps\rightarrow 0.\nonumber
\end{align}
By regularity theory of uniformly parabolic systems and the uniform H\"older continuity of the $u_{\eps}$ and $v_{\eps}$ we deduce that $u,v\in C^{4,1}_{x,t}(P)$ and
\begin{align}\label{jonp}
 J_1=M_{11} p_{1,x}+M_{12}p_{2,x},\ J_2=M_{21} p_{1,x}+M_{22}p_{2,x}\ \ \text{in}\ \ P.
\end{align}

Next, for a fixed $\delta>0$ define a set $I_{1,\delta}=\{|v|\leq \delta<|u|\}$.
From the estimates \rf{p1p2} 
it follows that there exists $w\in L^2(I_{1,\delta})$ such that  $p_{1\eps,x}\rightharpoonup p_{1,x}$ and $v_\eps p_{2\eps,x}\rightharpoonup w$ 
weakly in $L^2(I_{1,\delta})$ as $\eps\rightarrow 0$. Therefore, one obtains
\begin{align}\label{misc12}
 \iint\limits_{I_{1,\delta}} \left(M_{11\eps} p_{1\eps,x}+M_{12\eps}p_{2\eps,x}\right)\phi_{x} dxdt\rightarrow \iint\limits_{I_{1,\delta}} \left(\frac{1}{3\mu}|u|^3p_{1,x}+\frac{1}{2\mu}|u|^2w\right)\phi_{x}dxdt
\end{align}
as $\eps\rightarrow 0$.
On the other hand, one has the estimate
\begin{align}\label{misc115}
 &\iint\limits_{|u|\leq\delta}(M_{11\eps}p_{1\eps,x}+M_{12\eps}p_{2\eps,x})\phi_{x}dxdt \nonumber\\
&\leq C\left( \ \iint\limits_{|u|\leq\delta} M_{11\eps}\left(M_{11\eps}p_{1\eps,x}^2
+2M_{12\eps} p_{1\eps,x}p_{2\eps,x}+M_{22\eps}p_{2\eps,x}^2\right)dxdt\right)^{1/2}\nonumber\\
&\leq C\delta^{3/2}
\end{align}
Let us decompose the second term in \rf{weak9} as follows
\begin{align}\label{decomp}
 \iint\limits_{|u|>\delta,\ |v|>\delta}\left(M_{11\eps} p_{1\eps,x}+M_{12\eps}p_{2\eps,x}\right)\phi_{x} dxdt
+\iint\limits_{|u|\le\delta}\left(M_{11\eps} p_{1\eps,x}+M_{12\eps}p_{2\eps,x}\right)\phi_{x} dxdt\nonumber\\
+\eps\!\!\iint\limits_{Q_{T}}p_{1\eps,x}\phi_{x}dxdt+\iint\limits_{|u|>\delta,\ |v|\le\delta}\left(M_{11\eps} p_{1\eps,x}+M_{12\eps}p_{2\eps,x}\right)\phi_{x} dxdt
\end{align}
and take the limit $\delta\rightarrow 0$ extracting a proper diagonal subsequence $\eps\rightarrow 0$ as follows. Using \rf{jonp} one has
\begin{align}
\displaystyle &\lim_{\delta\rightarrow 0}\  |\iint\limits_{|u|>\delta,\ |v|>\delta} \left(M_{11\eps} p_{1\eps,x}+M_{12\eps}p_{2\eps,x}\right)\phi_{x} dxdt-
\iint\limits_{P} J_1\phi_{x}dxdt|\le\nonumber\\
 &\lim_{\delta\rightarrow 0}\  |\iint\limits_{|u|>\delta,\ |v|>\delta} \left(M_{11\eps} p_{1\eps,x}+M_{12\eps}p_{2\eps,x}\right)\phi_{x} dxdt-
\iint\limits_{|u|>\delta,\ |v|>\delta} J_1\phi_{x}dxdt|+\nonumber\\
&+\lim_{\delta\rightarrow 0}\ |\iint\limits_{|u|>\delta,\ |v|>\delta}J_1\phi_{x}dxdt-\iint\limits_{P} J_1\phi_{x}dxdt|=0,\nonumber
\end{align}
where in the last line we used that $J_1\phi_{x}$ is a bounded continuous function in $P$.
Furthermore, from \rf{misc12} one obtains
\begin{align}
\displaystyle &\lim_{\delta\rightarrow 0}\  |\iint\limits_{I_{1,\delta}} \left(M_{11\eps} p_{1\eps,x}+M_{12\eps}p_{2\eps,x}\right)\phi_{x} dxdt\nonumber\\
&\quad\quad\quad\quad\quad\quad\quad-
\iint\limits_{v=0,\ |u|>0} \left(\frac{1}{3\mu}|u|^3p_{1,x}+\frac{1}{2\mu}|u|^2w\right)\phi_{x}dxdt|\le\nonumber\\
&\lim_{\delta\rightarrow 0}\  |\iint\limits_{I_{1,\delta}} \left(M_{11\eps} p_{1\eps,x}+M_{12\eps}p_{2\eps,x}\right)\phi_{x} dxdt\nonumber\\
&\quad\quad\quad\quad\quad\quad\quad-
\iint\limits_{I_{1,\delta}} \left(\frac{1}{3\mu}|u|^3p_{1,x}+\frac{1}{2\mu}|u|^2w\right)\phi_{x}dxdt|+\nonumber\\
&+\lim_{\delta\rightarrow 0}\ |\iint\limits_{|v|\le\delta,\ |u|>\delta}\left(\frac{1}{3\mu}|u|^3p_{1,x}+\frac{1}{2\mu}|u|^2w\right)\phi_{x}dxdt\nonumber\\
&\quad\quad\quad\quad\quad\quad\quad
-\iint\limits_{v=0,\ |u|>0} \left(\frac{1}{3\mu}|u|^3p_{1,x}+\frac{1}{2\mu}|u|^2w\right)\phi_{x}dxdt|\le\nonumber\\
&\lim_{\delta\rightarrow 0}\ |\iint\limits_{\delta\ge|v|>0}\left(\frac{1}{3\mu}|u|^3p_{1,x}+\frac{1}{2\mu}|u|^2w\right)\phi_{x}dxdt|\le\nonumber\\
&\lim_{\delta\rightarrow 0}\ C\!\left(\!\left(\ \iint\limits_{\delta\ge|v|>0}|u|^3dxdt\iint\limits_{\delta\ge|v|>0}|u|^3p_{1,x}^2dxdt\right)^\frac{1}{2}\!\!+\!\left(\ \iint\limits_{\delta\ge|v|>0}|u|^4dxdt
\iint\limits_{\delta\ge|v|>0}w^2dxdt\right)^\frac{1}{2}\right)\nonumber\\
 &\le \lim_{\delta\rightarrow 0}\ C\left(\iint\limits_{\ \delta\ge|v|>0}1dxdt\right)^\frac{1}{2}= 0,\nonumber
\end{align}
where in the last inequality we used \rf{ass1} and \rf{p1p2}.
Therefore, the last two estimates together with \rf{misc115} imply that there exists a subsequence $\eps\rightarrow 0$ such that \rf{weak9} converge to \rf{ass3}.

Similarly, defining for a fixed $\delta\ge 0$ the set $I_{2,\delta}=\{|u|\leq\delta<|v|\}$ one can estimate 
\begin{align}\label{misc18}
 \iint\limits_{|v|\leq\delta}(M_{21\eps} p_{1\eps,x}+M_{22\eps}p_{2\eps,x})\phi_{x}dxdt\leq C\delta,
\end{align}
and
\begin{align}
 &\iint\limits_{I_{2,\delta}} \left({M}_{21\eps} p_{1\eps,x}+{M}_{22\eps}p_{2\eps,x}-\frac{1}{3}|v_\eps|^3 p_{2\eps,x}\right)\phi_{x}dxdt\nonumber\\
  &\leq C\left(\iint\limits_{I_{2,\delta}}\left(M_{21\eps}^2 p_{1\eps,x}^2+2M_{21\eps}\left(M_{22\eps}-\frac{1}{3}|v_\eps|^3\right) p_{1\eps,x}p_{2\eps,x}\right.\right.\nonumber\\
&\quad \quad\quad\left.\left.+\left(M_{22\eps}-\frac{1}{3}|v_\eps|^3\right)^2p_{2\eps,x}^2\right)dxdt\right)^{1/2} \nonumber\\
&\leq C\left(\iint\limits_{I_{2,\delta}}\frac{1}{\mu}|u_\eps||v_\eps|^2\left(M_{11\eps} p_{1\eps,x}^2+2M_{21\eps} p_{1\eps,x}p_{2\eps,x}+M_{22\eps}p_{2\eps,x}^2\right)dxdt\right)^{1/2} \nonumber\\
&\leq C\delta^{1/2}.
\end{align}
Moreover, again from \rf{p1p2} it follows that $p_{2\eps,x}\rightharpoonup p_{2,x}$ weakly in $L^2(I_{2,\delta})$ as $\eps\rightarrow 0$ and therefore,
we deduce that
\begin{align}\label{misc17}
 \iint\limits_{I_{2,\delta}} \left(\frac{1}{3}|v_\eps|^3 p_{2\eps,x}\right)\phi_{x} dxdt
\rightarrow \iint\limits_{I_{2,\delta}} \left(\frac{1}{3}|v|^3p_{2,x}\right)\phi_{x}dxdt
\end{align}
as $\eps\go 0$.
Taking the limit $\delta\go 0$ and the corresponding diagonal sequence $\eps\rightarrow 0$ in \rf{weak10} (as was done before for \rf{weak9})  shows that it converges to \rf{ass4} in view of \rf{misc18}--\rf{misc17}.

 To prove \rf{ass5} notice that from $u_{0,\eps}\rightarrow u_{0}$, $v_{0,\eps}\rightarrow v_{0}$ in $H^1(\Omega)$ and \rf{dte} we get
\begin{align*}
\limsup\limits_{t\rightarrow0}\int\limits_\Omega \left(\sigma u_{x}^2(x,t)+(u_{x}+v_x)^2(x,t)\right)dx\leq \int\limits_\Omega \left(\sigma u_{0,x}^2+(u_{0,x}+v_{0,x})^2\right)dx.
\end{align*}
Since also
\begin{align*}
 u_{x}(.,t)\rightarrow u_{0,x} \text{ and }
 v_{x}(.,t)\rightarrow v_{0,x} \text{ weakly in }L^2(\Omega)
\end{align*}
as $t\rightarrow0$, the assertion \rf{ass5} follows.

Finally,  the energy inequality \rf{EE} follows from \rf{dte}, \rf{p1p1} and the standard weakly lower semicontinuity argument. 
The proof of the theorem is complete.
\end{proof}

\section{Nonnegativity of solutions}

In this section we prove that the global weak solutions constructed in the previous section are nonnegative provided the initial data $u_0$ and $v_0$ 
are nonnegative. Furthermore, for the system \rf{pot} considered with intermolecular potentials $\Pi_1(u)$ and $\Pi_2(v)$ as in \rf{mod2}--\rf{mod4} we show
existence of positive smooth solutions.

\subsection{Nonnegativity in the absence of intermolecular forces}

Following ideas of \cite{bernis1990higher}, we define a suitable entropy  in order to show nonnegativity of the weak solutions $u$ and $v$ from the Theorem 2.1,
provided \rf{basic} holds. 

For $n\in\{2,3\}$ we set
\begin{align}\label{G0}
 g_{\eps,n}(s)=-\int\limits_s^A \frac{dr}{(|r|^n+\eps)^{1/2}},\ G_{\eps,n}(s)=-\int\limits_s^A g_{\eps,n}(r)dr
\end{align}
with a constant $A$ such that $A\ge\max\{|u_\eps|,\,|v_\eps|\}$ for all sufficiently small $\eps$. Then one has
\begin{align}
 G_{\eps,n}'(s)=g_{\eps,n}(s),\ G_{\eps,n}''(s)=g_{\eps,n}'(s)=\frac{1}{(|s|^n+\eps)^{1/2}}.\nonumber
\end{align}
Also,
\begin{align}
 g_{\eps,n}(s)\leq 0,\ G_{\eps,n}(s)\geq 0 \text{ if } s\leq A\nonumber
\end{align}
and
\begin{align}\label{G1}
 G_{\eps,n}(s) \leq G_{0}(s)\text{ for all }s\in \Real^1
\end{align}
where $G_{0,n}=\lim_{\eps\rightarrow0} G_{\eps,n}$ such that for $0\leq s\leq A$
\begin{align}\label{G2}
 G_{0}(s)=\left\{\begin{array}{ll} (A-s-s\log\left(\frac{A}{s}\right)), & n=2\\ 2(\sqrt{A}+\sqrt{\frac{1}{A}}s-2\sqrt{s}), & n=3 \end{array}\right. .
\end{align}

Since the structure of \rf{mod1} is not symmetric with respect to $u$ and $v$ we use two entropies: one depending on $u$ and the other on $v$. 
For a fixed $\delta>0$ one has
\begin{align}\label{misc16}
 &\frac{d}{dt}\int\limits_\Omega \left(G_{\eps,3}(u_\eps)+G_{\eps,2}(v_\eps)\right)dx=\int\limits_\Omega \left(G_{\eps,3}'(u_\eps)u_{\eps,t}+G_{\eps,2}'(v_\eps)v_{\eps,t}\right)dx\nonumber\\
&=\int\limits_\Omega \left(G_{\eps,3}''(u_{\eps})u_{\eps,x}((M_{11\eps}+\eps)p_{1\eps,x}+M_{12\eps}p_{2\eps,x})\right)dx\nonumber\\
&\quad+\int\limits_\Omega \left(G_{v\eps}''(v_{\eps})v_{\eps,x}(M_{21\eps}p_{1\eps,x}+(M_{22\eps}+\eps)p_{2\eps,x})\right)dx\nonumber\\
&\leq\frac{\delta}{2} \int\limits_\Omega \left((G_{\eps,3}''(u_{\eps}))^2(M_{11\eps}+\eps)( (M_{11\eps}+\eps)p_{1\eps,x}^2+2 M_{12\eps}p_{1\eps,x}p_{2\eps,x}\right.\nonumber\\
&\quad\left.+(M_{22\eps}+\eps)p_{2\eps,x}^2)\right)dx +\frac{\delta}{2}\int\limits_\Omega \left((G_{v\eps}''(v_{\eps}))^2(M_{22\eps}+\eps)( (M_{11\eps}+\eps)p_{1\eps,x}^2\right.\nonumber\\
&\quad\left.+2 M_{12\eps}p_{1\eps,x}p_{2\eps,x}+(M_{22\eps}+\eps)p_{2\eps,x}^2)\right)dx+\frac{1}{2\delta}\int\limits_\Omega \left(u_{\eps,x}^2+v_{\eps,x}^2 \right)dx.
\end{align}
By definition \rf{G0} it follows that
\begin{align}\label{GC}
 (G_{\eps,3}''(u_{\eps}))^2(M_{11\eps}+\eps)\le C,\ \text{and}\ (G_{\eps,2}''(v_{\eps}))^2(M_{22\eps}+\eps)\le C,
\end{align}
where in the last inequality we used the fact that there exists a constant $C$ such that  $M_{22,\eps}\leq C|v_\eps|^2$ holds. 
Combining \rf{misc16} with the energy inequality \rf{dte} and taking $\delta<1$ one obtains
\begin{align}\label{GC1}
 &\frac{d}{dt}\int\limits_\Omega \left(G_{\eps,3}(u_\eps)+G_{\eps,2}(v_\eps)\right)dx+\frac{1}{2}\frac{d}{dt}E(u_\eps,v_\eps)\\
&\quad+(1-\delta)\int\limits_\Omega \left(M_{11\eps}+\eps)p_{1\eps,x}^2+2 M_{12\eps}p_{1\eps,x}p_{2\eps,x}+(M_{22\eps}+\eps)p_{2\eps,x}^2\right)dx\\
&\leq \frac{1}{2\delta}\int\limits_\Omega \left(u_{\eps,x}^2+v_{\eps,x}^2 \right)dx\leq \frac{C}{2\delta}\int\limits_\Omega \left(\sigma u_{\eps,x}^2+(u_{\eps,x}+v_{\eps,x})^2\right)dx.
\end{align}
This implies using Gronwall inequality
\begin{align*}
\int\limits_\Omega \left(G_{\eps,3}(u_\eps)+G_{\eps,2}(v_\eps)\right)dx \leq \exp\left({\frac{t}{\delta}}\right)\int\limits_\Omega \left(G_{\eps,3}(u_{0\eps})+G_{\eps,2}(v_{0\eps})\right)dx+\frac{1}{2} E(u_{0\eps},v_{0\eps}).
\end{align*}
On the other hand by \rf{basic}, \rf{dem} and \rf{G1}--\rf{G2} one has
\begin{align*}
 \int\limits_\Omega \left(G_{\eps,3}(u_{0\eps})+G_{\eps,2}(v_{0\eps})\right)dx&\leq \int\limits_\Omega \left(G_{0,3}(u_{0\eps})+G_{0,2}(v_{0\eps})\right)dx\\
&\leq \int\limits_\Omega \left(G_{0,3}(u_{0})+G_{0,2}(v_{0})\right)dx\leq C.
\end{align*}
Therefore, the last two estimates imply that for all $t\leq T$
\begin{align}\label{G4}
 \int\limits_\Omega \left(G_{\eps,3}(u_{\eps})+G_{\eps,2}(v_{\eps})\right)dx\leq C.
\end{align}

Finally, we prove the nonnegativity of $u$ and $v$ by contradiction. Assume there is a point $(x_0,t_0)\in Q_T$ such that $u(x_0,t_0)<0$. Since $u_\eps\rightarrow u$ uniformly there exist $\gamma>0,\ \eps_0>0$ such that
\begin{align*}
 u_\eps(x,t_0)<-\gamma\text{ if }\ |x-x_0|<\gamma,\ x\in\bar{\Omega},\ \eps<\eps_0.
\end{align*}
For such $x$
\begin{align*}
 G_{\eps,3}(u_\eps(x,t_0))&=-\int\limits_{u_\eps(x,t_0)}^A g_{\eps,3}(s)ds\geq -\int\limits_{-\gamma}^0 g_{\eps,3}(s)ds\\
&\rightarrow -\int\limits_{-\gamma}^0 g_{0,3}(s)ds \text{ as }\eps\rightarrow 0
\end{align*}
by monotone convergence theorem, where $g_{0,n}(s)=\lim_{\eps\rightarrow0}g_{\eps,n}(s)$. Since by \rf{G0} 
\begin{align}\label{G5}
g_{0,n}(s)=-\infty\ \text{if}\ s<0,\ n\ge 2
\end{align} 
it follows that 
\begin{align*}
\lim_{\eps\rightarrow 0} G_{\eps,3}(u_\eps(x,t_0))=\infty,
\end{align*}
which is a contradiction to \rf{G4}. A completely analogous argument shows  $v\ge 0$ using \rf{G4} and \rf{G5} with $n=2$.

\subsection{The case including intermolecular forces}

In this section we consider the system \rf{mod1}-\rf{mod2} in the presence of the intermolecular forces given as in \rf{mod4}
considered with \rf{basbc} and the initial data satisfying
\begin{align}\label{idp}
 \int\limits_\Omega \left[U_1(u_{0})+U_2\left(v_{0}\right)\right]dx\leq C_1,
\end{align}
where by definition 
\begin{align*}
U_k(s)=-\int\limits_s^\infty \Pi_k(\tau)d\tau.
\end{align*}
\begin{thm}
Assume that $0<n<m$ and $m\geq 3$ in \rf{mod4}. Then a positive smooth solution to \rf{mod1}-\rf{mod4} coupled with \rf{basbc}, \rf{idp}
exists for all $t\in(0,T)$.
\end{thm}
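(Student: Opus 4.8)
The plan is to follow the same regularization strategy as in Section~2, but now with the intermolecular potentials $\Pi_1(u),\Pi_2(v)$ of the form \rf{mod4} retained in the pressures \rf{mod2}, and to upgrade nonnegativity of the limit to strict positivity (and hence smoothness) by exploiting the singularity of $\Pi_k$ at $s=0$. First I would set up the regularized system \rf{regequa2} with $p_{1,\eps}=(\sigma+1)u_{\eps,xx}+v_{\eps,xx}-\Pi_1(u_\eps)$, $p_{2,\eps}=u_{\eps,xx}+v_{\eps,xx}-\Pi_2(v_\eps)$, together with the approximating initial data $u_{0\eps},v_{0\eps}$ as before; note $\Pi_k$ is smooth and bounded on the range of the (strictly positive, for fixed $\eps$) solutions, so the local existence from \cite[Theorem 6.3]{euidelʹman1969parabolic} still applies. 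The energy functional now acquires the extra terms $\int_\Omega[U_1(u_\eps)+U_2(v_\eps)]\,dx$, and differentiating in time along solutions reproduces \rf{dte} with $E$ replaced by $E+2\int_\Omega[U_1(u_\eps)+U_2(v_\eps)]$; since $U_k\geq 0$ for $m\geq 3$ (the repulsive $s^{-n}$ term dominates near $0$ and $U_k$ is bounded below overall), the bound \rf{idp} gives $E(u_{0\eps},v_{0\eps})+2\int_\Omega[U_1(u_{0\eps})+U_2(v_{0\eps})]\leq C$, hence the same a priori $H^1$-in-$x$ bounds \rf{h1} and the dissipation bounds \rf{impbound}--\rf{epsp}.

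Next I would derive the entropy estimate exactly as in Section~3.1, using $g_{\eps,3}(u_\eps)$ and $g_{\eps,2}(v_\eps)$; the only new contribution comes from the $\Pi_k$-terms inside $p_{k,\eps}$. In the computation of $\frac{d}{dt}\int_\Omega[G_{\eps,3}(u_\eps)+G_{\eps,2}(v_\eps)]\,dx$ these produce terms of the form $\int_\Omega G_{\eps,3}''(u_\eps)u_{\eps,x}(M_{11\eps}+\eps)(-\Pi_1'(u_\eps)u_{\eps,x})\,dx$ and similar; since $G_{\eps,3}''(u_\eps)(M_{11\eps}+\eps)\leq C$ by \rf{GC} and $-\Pi_1'(s)=\frac{n}{s^{n+1}}-\frac{m\gamma_1}{s^{m+1}}$, the genuinely dangerous piece is the one with a definite sign—namely the attractive part, which is sign-favorable (it contributes $-\int\frac{m\gamma_1}{u_\eps^{m+1}}u_{\eps,x}^2\cdot(\text{bounded})$, negative), while the repulsive part $+\int\frac{n}{u_\eps^{n+1}}u_{\eps,x}^2\cdot(\text{bounded})$ must be absorbed. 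Here I would use that $G_{\eps,3}''(u_\eps)^2(M_{11\eps}+\eps)\cdot u_\eps^{-n-1}\leq C u_\eps^{2-n}$, which, after noting that the entropy itself controls negative powers of $u_\eps$ through $G_{0,3}(s)\sim s^{-1/2}$ near $0$... — more robustly, one simply keeps this term on the left-hand side as an extra nonnegative dissipation (it is nonnegative for $u_\eps$ small) and discards it, or Gronwalls it against $\int u_{\eps,x}^2\,dx$ as in \rf{GC1}. The upshot is the uniform bound \rf{G4}, i.e. $\int_\Omega[G_{\eps,3}(u_\eps)+G_{\eps,2}(v_\eps)]\,dx\leq C$ for $t\leq T$, which by the contradiction argument of Section~3.1 (using \rf{G5}) already gives $u,v\geq 0$ in $\bar Q_T$.

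To go from nonnegativity to strict positivity I would introduce a second, stronger entropy tuned to the singularity of $\Pi_k$: set $U_k$ itself (or a regularized $U_{k,\eps}$) as a Lyapunov density and test the equation against $\Pi_k'$; alternatively, exploit that the energy inequality already bounds $\int_\Omega U_k(u(\cdot,t))\,dx\leq C$ uniformly, and since $U_k(s)\to+\infty$ as $s\to 0^+$ (because $\int_s^1 \tau^{-n}d\tau\to\infty$ for $n\geq 1$; for $0<n<1$ one instead uses that the combined energy+entropy gives control of $\int u_\eps^{-1/2}$, forcing positivity), a uniform $L^1$-bound on $U_k(u)$ rules out $u=0$ on any set of positive measure, and the spatial Hölder continuity \rf{ass1} then upgrades this to a pointwise positive lower bound on each compact time slice. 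Once $u,v$ are bounded away from $0$ on $\bar Q_T$, the system \rf{mod1}--\rf{mod2} is uniformly parabolic there with smooth coefficients, so parabolic Schauder theory (again via \cite{euidelʹman1969parabolic}) bootstraps $u,v\in C^{4,1}_{x,t}(\bar Q_T)$, and the solution is classical. The main obstacle I anticipate is precisely the positivity step when $0<n<1$: then $U_k$ need not blow up at $0$, so the energy bound alone does not exclude touchdown, and one must combine it carefully with the entropy bound \rf{G4} (whose integrand $G_{0,3}\sim s^{-1/2}$ does control a negative power of $u$) — making the "energy $+$ entropy" combination do the work that $U_k$ alone does when $n\geq 1$. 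I would handle this by choosing, in the definition of the entropy, the exponent so that $G_{\eps,n}''(s)\sim s^{-\min(n,\,\text{something})/2}$ matches the worst singular term actually present, which for $m\geq 3$ is always controllable.
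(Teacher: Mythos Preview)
Your proposal contains the right core idea but takes an unnecessary detour and misidentifies the source of the positivity. The paper's proof is considerably shorter: it \emph{does not} invoke the Bernis--Friedman entropies $G_{\eps,n}$ at all. Instead, once the energy identity \rf{dte} is seen to hold with the augmented functional $E(u_\eps,v_\eps)=\int_\Omega\bigl[\sigma u_{\eps,x}^2+(u_{\eps,x}+v_{\eps,x})^2+2U_1(u_\eps)+2U_2(v_\eps)\bigr]\,dx$, one immediately gets $\sup_{t}\int_\Omega U_k(u_\eps(\cdot,t))\,dx\le C$ (using that each $U_k$ is bounded below). Then, combining this with the uniform spatial H\"older estimate $u_\eps(x,t)\le \min_\Omega u_\eps(\cdot,t)+C|x-x_0|^{1/2}$, a direct computation of the type in \cite{bertozzi2001dewetting} yields $\int_\Omega U_1(u_\eps)\,dx\ge C_2\,\eta(\min_\Omega u_\eps)+C_3$ with $\eta(s)=-\log s$ for $m=3$ and $\eta(s)=s^{3-m}$ for $m>3$; since $\eta(s)\to\infty$ as $s\to 0^+$, this forces $\min_\Omega u_\eps\ge\delta>0$ uniformly in $\eps$ and $t$, and likewise for $v_\eps$. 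Uniform parabolicity on the $\eps$-level then passes to the limit and gives the claimed smoothness.

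Your ``main obstacle'' for $0<n<1$ is a red herring stemming from a sign/label confusion. In the convention of \rf{mod4} the term $1/s^n$ is the attractive van der Waals part and $-\gamma_k/s^m$ is the repulsive Born part; since $m>n$, it is the \emph{$m$-term} that makes $U_k(s)\sim\gamma_k\,(m-1)^{-1}s^{-(m-1)}\to+\infty$ as $s\to 0^+$. No condition on $n$ is needed for this blow-up, and the hypothesis $m\ge 3$ is precisely what makes the H\"older integration produce a divergent $\eta$. Consequently your entropy patch for small $n$ is not needed. Also, your phrasing ``a uniform $L^1$-bound on $U_k(u)$ rules out $u=0$ on any set of positive measure, and H\"older continuity then upgrades this to a pointwise lower bound'' is not sharp enough: to pass to the limit you need the lower bound uniform in $\eps$, which requires the quantitative estimate above rather than a qualitative measure-zero argument applied after the limit.
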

\begin{proof}
Taking a suitable H\"older continuous regularisation of potentials $\Pi_k(s)$ and proceeding as in the section 2 one can show existence
of regularised solutions $u_\eps$ and $v_\eps$ to \rf{regequa2} considered now with \rf{mod1}-\rf{mod4} and \rf{basbc}, \rf{regic}
that satisfy the regularity properties as before.
Note that the energy functional \rf{fte} transforms in this case to
\begin{align*}
 E(u_\eps,v_\eps)=\int\limits_\Omega \left[\sigma u_{\eps,x}^2+(u_{\eps,x}+v_{\eps,x})^2+2U_1(u_{\eps})+2U_2(u_{\eps})\right]dx
\end{align*}
for which the energy inequality \rf{dte} still holds. Therefore, using the fact that $U_k(s),\, k=1,2$ are bounded from below,  and hence
\begin{align}\label{bU}
 -\int\limits_\Omega \left[U_1(u_{\eps})+U_2\left(v_{\eps}\right)\right]dx\leq C
\end{align}
together with \rf{idp} imply again the estimates \rf{h1}--\rf{epsp}.

We show additionally that there exists a constant $\delta$ independent of $\eps$ such that 
\begin{align}\label{m}
 u_{\eps}\geq \delta>0,\ v_{\eps}\geq\delta>0\ \text{hold in}\ Q_T.
\end{align}
Then proceeding to the limit $\eps\rightarrow 0$ as in Theorem 2.1 the smoothness of the positive limits $u$ and $v$ will follow 
from the uniform parabolic theory and \rf{m} and the statement of the theorem will be shown.
Indeed, observe from \rf{dte} that
\begin{align}
\displaystyle \sup_{t\in (0,T)} \int\limits_\Omega \left(U_1(u_{\eps}(\cdot,t))+U_2\left(v_{\eps}(\cdot,t)\right)\right)dx\leq C.
\end{align}
Since $U_2$ is bounded from below one has also
\begin{align*}
\displaystyle \sup_{t\in (0,T)}  \int\limits_\Omega U_1(u_{\eps}(\cdot,t))dx\leq C.
\end{align*}
Let $u_{\eps}(x_0,t)=\min_\Omega u_{\eps}(\cdot,t)$. By H\"older continuity of $u_\eps$ we get
\begin{align*}
 u_{\eps}(x,t)\leq u_{\eps}(x_0,t)+C|x-x_0|^{1/2}.
\end{align*}
Analogously to the proof for the single layer equation \rf{NSM} in \cite{bertozzi2001dewetting} one obtains
for $0<n<m$
\begin{align*}
  C\geq   \int\limits_\Omega U_1(u_{\eps}(\cdot,,t))dx\geq C_{2} \eta(u_{\eps}(x_0,t)) + C_{3},
\end{align*}
where $\eta(s)=-\log s$ for $m=3$, $\eta(s)= s^{3-m}$ for $m>3$. Hence $u_{\eps}(x_0,t)>0$ holds for all $t\in (0,T)$.
The same argument works for $\min_\Omega v_{\eps}(\cdot,t)$. Therefore \rf{m} is true.
\end{proof}

\section{Existence of nonnegative weak solutions in the Navier-slip case}

In this section we show that solutions $u_\eps$ and $v_\eps$ to the regularised system \rf{regequa2}--\rf{regic}
considered now with the Navier-slip mobility matrix 
\begin{align*}
M=\begin{pmatrix}
	  |u|^2+\eps && |u||v| \\
	  \ && \ \\
	  |u||v| && (1+\alpha)|v|^2+\eps
        \end{pmatrix}.
\end{align*}
converge to global nonnegative weak solutions to \rf{mod1}
considered with \rf{Mis}, \rf{pot} and \rf{basbc}--\rf{basic}. Note that the case when intermolecular forces are present i.e. for \rf{mod1} considered with 
\rf{mod2}--\rf{mod4} and \rf{Mis} proceeds then exactly as in Theorem 3.2 for the no-slip case.

The dissipation \rf{dte} of the energy functional \rf{fte} and the corresponding a priori estimates \rf{h1}--\rf{H12} and \rf{apri8}
stay true in the Navier-slip case as well. Therefore, \rf{weak1} holds again up to a subsequence as $\eps\rightarrow 0$. The following theorems that thus
obtained limits $u$ and $v$ are nonnegative global weak solutions.
\begin{thm}
Functions $(u,v)$ satisfy for any $T>0$ the following properties:
\begin{align}
 &u,v\in C^{1/2,1/8}_{x,t}(\bar{Q}_{T}),\ 
 u,v\in C^{4,1}_{x,t}(P),\label{iass1}\\
 &M_{11}p_{1,x}+M_{12}p_{2,x},\ M_{21}p_{1,x}+M_{22}p_{2,x}\in L^2(P),\label{iass0}\\
& |u|^2p_{1,x} 		 \in L^2(R),
  |v|^2p_{2,x}		 \in L^2(S);
\end{align}
where $P=\bar{Q}_{T}\backslash(\{u=0\}\cup\{v=0\}\cup\{t=0\})$, $R=\bar{Q}_{T}\cap\{v=0\}\cap\{|u|>0\}$ and $S=\bar{Q}_{T}\cap\{u=0\}\cap\{|v|>0\}$.
Furthermore, there exist functions $w_1\in L^2(R)$ and $w_2\in L^2(S)$, such that $(u,v)$ satisfies \rf{mod1} in the following sense:
\begin{align}\label{iass3}
 \iint\limits_{Q_{T}}u\phi_{t}&+\iint\limits_P \left(M_{11} p_{1,x}+M_{12}p_{2,x}\right)\phi_{x} \\&\quad\quad+\iint\limits_{R}\left(|u|^2p_{1,x}+|u|w_1\right)\phi_{x}=0,\nonumber\\
  \iint\limits_{Q_{T}}v\phi_{t}&+\iint\limits_P \left(M_{21} p_{1,x}+M_{22}p_{2,x}\right)\phi_{x} +\iint\limits_{S}(|v|w_2+(1+\alpha)|v|^2p_{2,x})\phi_{x}=0
 \label{iass4}
\end{align}
for all $\phi\in Lip(\bar{Q}_{T}),\phi=0$ near $t=0$ and $t=T$;
\begin{align}
&u(x,0)=u_{0}(x),\ v(x,0)=v_{0}(x),\quad x\in\bar{\Omega},\label{iass2}\\
&||u(\cdot,t)||_{L^1(\Omega)}=||u_0||_{L^1(\Omega)},\ ||v(\cdot,t)||_{L^1(\Omega)}=||u_0||_{L^1(\Omega)},\label{iass6}\\
&u_{x}(\cdot,t)\rightarrow u_{0,x}\text{ and }v_{x}(\cdot,t)\rightarrow v_{0,x}\text{ strongly in }L^2(\Omega)\text{ as }t\rightarrow0,\label{iass5}
\end{align}
and
\begin{equation}
 \begin{split}
  \label{iweak8}
u\text{ and }v &\text{ satisfy \rf{basbc} at all points of the lateral}\\ &\text{ boundary, where }u\neq 0\ \text{and}\ v\neq 0.
 \end{split}
\end{equation}
Finally, the following energy inequality holds
\begin{align}\label{iEE}
E(u(\cdot,T),v(\cdot,T))+\iint\limits_P \left(M_{21} p_{1,x}^2+M_{22}p_{2,x}^2\right)\le E(u_0,v_0).
\end{align}
\end{thm}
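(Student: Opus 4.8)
The plan is to run the scheme of Theorem~2.1 with the Navier-slip mobility in place of \rf{Mns}. The regularised system \rf{regequa2} (with the matrix displayed above), the energy equality \rf{dte}, and the a priori bounds \rf{h1}--\rf{H12} and \rf{apri8} carry over verbatim, since none of them uses the precise entries of $M_\eps$; hence \rf{weak1} holds along a subsequence, and exactly as in Section~2 this already yields the first part of \rf{iass1} together with \rf{iass2}, \rf{iass6} and \rf{iass5} (the last one from \rf{dte} and the weak $L^2(\Omega)$-convergence of $u_{\eps,x},v_{\eps,x}$ as $t\to 0$).

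The one genuinely new ingredient is the refined dissipation estimate. Here $M_{11\eps}\ge|u_\eps|^2$, $M_{22\eps}\ge(1+\alpha)|v_\eps|^2$, $M_{12\eps}=|u_\eps||v_\eps|$, all bounded by a constant by \rf{H12}. In \rf{impbound} I would control the cross term by Young's inequality with weight $c:=(1+\alpha)^{-1/2}$,
\begin{align*}
2M_{12\eps}|p_{1\eps,x}p_{2\eps,x}|\le c\,|u_\eps|^2p_{1\eps,x}^2+\tfrac1c\,|v_\eps|^2p_{2\eps,x}^2,
\end{align*}
and absorb the right-hand side into the left side of \rf{impbound}: since $(1+\alpha)^{-1}<c<1$ one has $1-c>0$ and $(1+\alpha)-\tfrac1c>0$, giving the analogues of \rf{p1p1}--\rf{p1p2},
\begin{align*}
\iint\limits_{Q_T}|u_\eps|^2p_{1\eps,x}^2\le C,\quad\iint\limits_{Q_T}|v_\eps|^2p_{2\eps,x}^2\le C,\quad\iint\limits_{Q_T}|u_\eps||v_\eps|\,|p_{1\eps,x}p_{2\eps,x}|\le C.
\end{align*}
I want to stress that this is the only place where the hypothesis $\alpha>0$ (equivalently, liquid--liquid slip length $b>0$) is used: when $\alpha=0$ the matrix \rf{Mis} is singular, $(1+\alpha)^{-1}=1$, and the cross term cannot be absorbed.

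From here one follows the Section~2 pattern. Boundedness of $M_\eps$ gives $\iint_{Q_T}J_{1,\eps}^2,\iint_{Q_T}J_{2,\eps}^2\le C$ as in \rf{apri7a}--\rf{apri7b}, hence weak $L^2(Q_T)$-limits $J_1,J_2$; the $\eps$-terms vanish by \rf{epsp}; and the regularity theory for uniformly parabolic systems together with the H\"older bounds give $u,v\in C^{4,1}_{x,t}(P)$ and $J_1=M_{11}p_{1,x}+M_{12}p_{2,x}$, $J_2=M_{21}p_{1,x}+M_{22}p_{2,x}$ on $P$, i.e. \rf{iass1}--\rf{iass0}. The step I expect to be the main obstacle is then the passage to the limit in \rf{weak9}--\rf{weak10} with the correct contributions over the degenerate sets, via a diagonal extraction in $(\eps,\delta)$. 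For the first equation I split $Q_T$ into $\{|u|>\delta,|v|>\delta\}\subset P$, $\{|u|\le\delta\}$ and $I_{1,\delta}:=\{|v|\le\delta<|u|\}$: on the first set the flux converges to $\iint_P J_1\phi_x$ since $J_1\phi_x$ is bounded and continuous on $P$; on $\{|u|\le\delta\}$ the flux term is $O(\delta)$ after $\eps\to 0$, by positive semidefiniteness of $M_\eps$ and $M_{11\eps}\le C\delta^2+\eps$, as in \rf{misc115}; and on $I_{1,\delta}$, where $|u_\eps|$ is bounded below, the refined estimates give $p_{1\eps,x}\rightharpoonup p_{1,x}$ and $|v_\eps|p_{2\eps,x}\rightharpoonup w_1$ weakly in $L^2$, so that $|u|^2p_{1,x}\in L^2(R)$, $w_1\in L^2(R)$, and (using $u_\eps\to u$ uniformly) the flux tends to $|u|^2p_{1,x}+|u|w_1$; the leftover integral over $\{0<|v|\le\delta\}$ vanishes as $\delta\to0$ because $|u|^2p_{1,x},|u|w_1\in L^2$ and $|\{0<|v|\le\delta\}|\to0$, which produces \rf{iass3}.

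The second equation is treated symmetrically along $I_{2,\delta}:=\{|u|\le\delta<|v|\}$. Here the new feature, compared with the no-slip case, is that $|u_\eps|p_{1\eps,x}$ remains bounded in $L^2(I_{2,\delta})$ (not merely vanishing as $\delta\to0$), so it has a generally nontrivial weak limit $w_2\in L^2(S)$, and the flux limit is $(1+\alpha)|v|^2p_{2,x}+|v|w_2$, giving \rf{iass4}. Finally \rf{iweak8} follows from the $C^{4,1}$ regularity on $P$ and the fact that each $(u_\eps,v_\eps)$ satisfies \rf{basbc}, the energy inequality \rf{iEE} follows from \rf{dte}, the refined bound above and weak lower semicontinuity exactly as in Theorem~2.1, and when $u_0,v_0\ge0$ the nonnegativity of $u$ and $v$ is recovered by the entropy argument of Section~3.1 applied with $G_{\eps,2}$ to both components, \rf{GC} being replaced by $(G_{\eps,2}''(u_\eps))^2(M_{11\eps}+\eps)\le C$ and $(G_{\eps,2}''(v_\eps))^2(M_{22\eps}+\eps)\le C$.
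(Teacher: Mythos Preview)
Your proposal is correct and follows essentially the same route as the paper's proof: reduce to the scheme of Theorem~2.1, derive the refined dissipation bounds $\iint|u_\eps|^2p_{1\eps,x}^2\le C$, $\iint|v_\eps|^2p_{2\eps,x}^2\le C$ from \rf{impbound} by Young's inequality, and then pass to the limit over the sets $I_{1,\delta}$, $I_{2,\delta}$ by a diagonal extraction. The paper uses the Young weight $\lambda=\tfrac{2}{2+\alpha}$ rather than your $c=(1+\alpha)^{-1/2}$, but both choices absorb the cross term for $\alpha>0$; your explicit remark that $\alpha>0$ is essential here is a useful addition the paper leaves implicit, and your inclusion of the nonnegativity argument with $G_{\eps,2}$ for both components is exactly what the paper does in its separate Theorem~4.2.
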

\begin{proof}
The assertions \rf{iass1}--\rf{iass0} and \rf{iass2}--\rf{iass5} follow exactly as in the proof of Theorem 2.1. Using \rf{impbound} one observes
\begin{align*}
 \iint\limits_{Q_T}(M_{11\eps}p_{1,\eps,x}^2+M_{22\eps}p_{2\eps,x}^2)dxdt&\leq C -2\iint\limits_{Q_T} M_{12\eps} p_{1\eps,x}p_{2\eps,x}dxdt\\
&= C - 2\iint\limits_{Q_T}(|u_\eps||v_\eps| p_{1\eps,x}p_{2\eps,x})dxdt
\end{align*}
By Young's inequality
\begin{align*}
2|u_\eps||v_\eps| p_{1\eps,x}p_{2\eps,x}) \leq\frac{2}{2+\alpha}|u_\eps|^2 p_{1\eps,x}^2+\left(1+\frac{\alpha}{2}\right)|v_\eps|^2p_{2\eps,x}^2
\end{align*}
and hence one obtains
\begin{align}\label{p1p2i}
&\int\limits_{Q_T}|u_\eps|^2p_{1\eps,x}^2dx\leq C,&\ \int\limits_{Q_T}|v_\eps|^2p_{2\eps,x}^2dx\leq C,\\
&\iint\limits_{Q_T}(M_{11\eps}p_{1,\eps,x}^2+M_{22\eps}p_{2\eps,x}^2)dxdt\leq C.\nonumber
\end{align}
From the last inequality \rf{iEE} then follows.

Next, for $\phi$ as in \rf{iass4}--\rf{iass4} one writes again \rf{weak9}--\rf{weak10}.
Considering the set $I_{1,\delta}$ and $I_{2,\delta}$ as in the proof of Theorem 2.1 and using the estimates \rf{p1p2i}
one can show in the analogous manner that \rf{weak9} and \rf{weak10} converge up to a diagonal subsequence as $\delta\rightarrow 0$ and $\eps\rightarrow 0$
to \rf{iass3} and \rf{iass4} respectively.
\end{proof}
\begin{thm}
The global weak solutions $u$ and $v$ constructed in the previous theorem are nonnegative.
\end{thm}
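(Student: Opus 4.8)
The plan is to transcribe the entropy argument of Section~3.1 to the Navier-slip matrix, the only structural change being the choice of exponents in the Bernis--Friedman entropies \rf{G0}. Since here \emph{both} mobilities degenerate quadratically, $M_{11\eps}=|u_\eps|^2+\eps$ and $M_{22\eps}=(1+\alpha)|v_\eps|^2+\eps$, I would use $G_{\eps,2}$ for $u_\eps$ \emph{and} for $v_\eps$ (rather than $G_{\eps,3}$ for $u$ as in the no-slip case). Recalling $G_{\eps,2}''(s)=(|s|^2+\eps)^{-1/2}$, the analogue of \rf{GC} is the elementary identity
\[
(G_{\eps,2}''(u_\eps))^2(M_{11\eps}+\eps)=\frac{|u_\eps|^2+\eps}{|u_\eps|^2+\eps}=1,\qquad (G_{\eps,2}''(v_\eps))^2(M_{22\eps}+\eps)=\frac{(1+\alpha)|v_\eps|^2+\eps}{|v_\eps|^2+\eps}\le 1+\alpha,
\]
so both quantities are bounded by a constant independent of $\eps$.

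Next I would differentiate $t\mapsto\int_\Omega\big(G_{\eps,2}(u_\eps)+G_{\eps,2}(v_\eps)\big)\,dx$ along solutions of \rf{regequa2} (with the Navier-slip matrix), integrate by parts using \rf{basbc}, and estimate the two resulting terms by Cauchy--Schwarz with respect to the positive-definite form $M_\eps+\eps I$: with $\mathcal D_\eps:=(M_{11\eps}+\eps)p_{1\eps,x}^2+2M_{12\eps}p_{1\eps,x}p_{2\eps,x}+(M_{22\eps}+\eps)p_{2\eps,x}^2\ge 0$ one has $\big((M_{11\eps}+\eps)p_{1\eps,x}+M_{12\eps}p_{2\eps,x}\big)^2\le(M_{11\eps}+\eps)\,\mathcal D_\eps$, and likewise for the $v$-flux. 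Young's inequality with a parameter $\delta\in(0,1)$ together with the identity above reproduces exactly \rf{misc16}, and combining with the energy equality \rf{dte} gives, as in \rf{GC1},
\[
\frac{d}{dt}\!\int_\Omega\!\big(G_{\eps,2}(u_\eps)+G_{\eps,2}(v_\eps)\big)dx+\tfrac12\frac{d}{dt}E(u_\eps,v_\eps)+(1-\delta)\!\int_\Omega\!\mathcal D_\eps\,dx\le\frac{C}{2\delta}\!\int_\Omega\!\big(\sigma u_{\eps,x}^2+(u_{\eps,x}+v_{\eps,x})^2\big)dx.
\]
Dropping the nonnegative dissipation, bounding the right-hand side by $\frac{C}{2\delta}E(u_\eps,v_\eps)$ and using $E(u_\eps,v_\eps)\le E(u_{0\eps},v_{0\eps})\le C$ from \rf{dte} and \rf{apri4}, Gronwall's inequality yields $\int_\Omega\big(G_{\eps,2}(u_\eps)+G_{\eps,2}(v_\eps)\big)dx\le e^{Ct/\delta}\int_\Omega\big(G_{\eps,2}(u_{0\eps})+G_{\eps,2}(v_{0\eps})\big)dx+\tfrac12E(u_{0\eps},v_{0\eps})$. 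Since $G_{0,2}$ is nonincreasing on $[0,A]$, the bounds \rf{dem}, \rf{G1} give $\int_\Omega\big(G_{\eps,2}(u_{0\eps})+G_{\eps,2}(v_{0\eps})\big)dx\le\int_\Omega\big(G_{0,2}(u_0)+G_{0,2}(v_0)\big)dx\le C$, so I obtain the uniform bound $\int_\Omega\big(G_{\eps,2}(u_\eps)+G_{\eps,2}(v_\eps)\big)dx\le C$ for all $t\le T$ — the analogue of \rf{G4}.

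Finally I would prove $u\ge0$, $v\ge0$ by contradiction exactly as in Section~3.1: if $u(x_0,t_0)<0$, the uniform convergence $u_\eps\go u$ gives $\gamma,\eps_0>0$ with $u_\eps(x,t_0)<-\gamma$ for $|x-x_0|<\gamma$, $\eps<\eps_0$, whence by monotone convergence $G_{\eps,2}(u_\eps(x,t_0))\ge-\int_{-\gamma}^0 g_{\eps,2}(s)\,ds\to-\int_{-\gamma}^0 g_{0,2}(s)\,ds=+\infty$ by \rf{G5} with $n=2$, contradicting the uniform bound just established; the same argument with $n=2$ applied to $v$ gives $v\ge0$. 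I do not expect a genuinely new obstacle: the argument is a direct transcription of Section~3.1, and the only point requiring attention is the correct choice of exponent ($n=2$ in both entropies) together with the verification that the cross term $2M_{12\eps}p_{1\eps,x}p_{2\eps,x}$ is kept inside the nonnegative quadratic form $\mathcal D_\eps$ controlled by the energy dissipation \rf{dte}, rather than being estimated separately.
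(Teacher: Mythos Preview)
Your proposal is correct and follows essentially the same route as the paper: the paper's proof of this theorem likewise replaces the no-slip entropies $(G_{\eps,3},G_{\eps,2})$ by $(G_{\eps,2},G_{\eps,2})$, verifies the analogues of \rf{GC}--\rf{GC1}, deduces the uniform bound $\int_\Omega(G_{\eps,2}(u_\eps)+G_{\eps,2}(v_\eps))\,dx\le C$, and concludes by the same contradiction argument using \rf{G5} with $n=2$ for both components. Your write-up is in fact slightly more careful in computing the constants in the analogue of \rf{GC} and in making explicit that the cross term stays inside the nonnegative form $\mathcal D_\eps$.
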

\begin{proof}
The proof proceeds similarly to  the argument for nonnegativity of the weak solutions in the no-slip case presented in the section 3.1.
The following estimates
\begin{align*}
 (G_{\eps,2}''(u_{\eps}))^2(M_{11\eps}+\eps)\le C,\ \text{and}\ (G_{\eps,2}''(v_{\eps}))^2(M_{22\eps}+\eps)\le C,
\end{align*}
and
\begin{align*}
 &\frac{d}{dt}\int\limits_\Omega \left(G_{\eps,3}(u_\eps)+G_{\eps,2}(v_\eps)\right)dx+\frac{1}{2}\frac{d}{dt}E(u_\eps,v_\eps)\\
&\quad+(1-\delta)\int\limits_\Omega \left(M_{11\eps}+\eps)p_{1\eps,x}^2+2 M_{12\eps}p_{1\eps,x}p_{2\eps,x}+(M_{22\eps}+\eps)p_{2\eps,x}^2\right)dx\\
&\leq \frac{1}{2\delta}\int\limits_\Omega \left(u_{\eps,x}^2+v_{\eps,x}^2 \right)dx.
\end{align*}
are used as analogs to \rf{GC}--\rf{GC1}  in this case in order to obtain the crucial estimate
\begin{align*}
 \int\limits_\Omega \left(G_{\eps,2}(u_{\eps})+G_{\eps,2}(v_{\eps})\right)dx\leq C.
\end{align*}
The rest of the proof proceeds proceeds exactly as in the last paragraph of the section 3.1 but now using \rf{G5} with $n=2$ for both functions $u$ and $v$.
\end{proof}
\section{Conclusion and discussions}

In this article we showed existence of nonnegative global weak solutions for 
the coupled lubrication systems corresponding to the cases of no-slip and Navier-slip conditions at both liquid-liquid and liquid-solid interfaces.
Our results can be generalised in a straight forward way to the system \rf{mod1} considered with the mobility matrix
\begin{align}\label{Mws}
M=\frac{1}{\mu}\begin{pmatrix}
	  \frac{1}{3}u^3+b_1u^2 && \frac{1}{2}u^2v+b_1uv \\
	  \ && \ \\
	  \frac{1}{2}u^2v+b_1uv && \frac{\mu}{3}v^3+u\ \!v^2+b_1v^2+b(\mu+1)v^2
        \end{pmatrix}.
\end{align}
corresponding to the weak-slip conditions at the both interfaces. As it was shown recently in~\cite{JMPW11} the latter model incorporates
both the no-slip and the Navier-slip models \rf{Mns},\rf{Mis} as limiting cases as the slip lengths $b,b_1$ tend simultaneously to zero or infinity, respectively.

One needs to point out that we obtained a slight difference between the weak formulations in the no-slip and Navier-slip cases (compare Theorems 2.1 and 4.1).
Due to the fact that $M_{11}$ and $M_{22}$ components in \rf{Mis} depend only on $u$ or $v$, respectively, in contrast to the no-slip case \rf{Mns}
there is no an analog of estimate \rf{misc18} in the Navier-slip case. Therefore, an additional (so far not identified in terms of solutions $u$ and $v$)
function $w_2$ appears on the singular set $S$ in the latter case. The same problem persists also in the weak-slip case because the leading orders of the mobility
matrix components on the set $S$ coincide with those for the Navier-slip case. 

In this sense we have ``more regularity`` for the weak solutions in the no-slip case then for ones the in weak- or Navier-slip cases. 
This interesting observation should be understood better in future in view of the fact that for the single lubrication equation \rf{NSM} the no-slip case is known to be more singular from both physical and analytical points of view then the slip cases.
At the same time we've become aware of an alternative proof for the existence of weak solutions in the no-slip case in~\cite{EM12} for which the authors have shown
the same regularity as we in the Navier-slip case. But also in our weak formulation for the no-slip case remains an open question weather not yet identified in terms of the solutions function $w_1$ vanishes on the singular set $R$.
Another observation appearing as well due to different component structures of the mobility matrices \rf{Mns} and \rf{Mis} is that we have
stronger entropy for $u$ then for $v$ in the case of the former matrix whereas the entropies are the same for the latter one.

Additionally in contrast to the existing results for the single lubrication equation \rf{NSM} (see e.g.~\cite{bernis1990higher,BP98})
we are not aware if the constructed weak solutions for the systems \rf{mod1} should necessarily posses zero contact angles.
This is due to an absence so far of the strict entropy dissipation inequality for the two layered systems \rf{mod1} which was shown before to hold for \rf{NSM}.
Combined energy-entropy dissipation inequalities derived here (as e.g. \rf{GC1}) do not imply $H^2$ a priori estimates on the solutions.

\section*{Acknowledgements}
SJ is grateful for the support by the  DFG of the project within the priority programme SPP 1506 ``Transport at Fluidic Interfaces''. 
The work of GK was supported by the postdoctoral scholarship at the Max-Planck-Institute for
Mathematics in the Natural Sciences, Leipzig. SJ and GK would like to thank Andreas M\"unch, Dirk Peschka and Barbara Wagner for fruitful discussions. The research of RT leading to these results has received funding from the European Community's Seventh Framework Programme FP7/2007-2013  under Grant Agreement no PIIF-GA-2009-25452--TFE.


\end{document}